\newcounter{num}[section] %
\newenvironment{theo}
{\refstepcounter{num}%
\bigskip\noindent{\bf Theorem~\arabic{section}.\arabic{num}. }\it}
{\smallskip}
\newenvironment{prop}
{\refstepcounter{num}%
\bigskip\noindent{\bf Proposition~\arabic{section}.\arabic{num}. }\it}
\newenvironment{lemma}
{\refstepcounter{num}%
\bigskip\noindent{\bf Lemma~\arabic{section}.\arabic{num}. }\it}
\newenvironment{proof_of}[1]{\medskip\noindent{\it Proof #1}}
{$\Box$ \bigskip}
\newcommand{\si}{\sigma}
\newcommand{\al}{\alpha}
\newcommand{\be}{\beta}
\newcommand{\ga}{\gamma}
\newcommand{\de}{\delta}
\newcommand{\tr}{\mathop{\rm tr}}
\newcommand{\mdeg}{\mathop{\rm mdeg}}
\newcommand{\diag}{\mathop{\rm diag}}
\newcommand{\Char}{\mathop{\rm char}}
\newcommand{\FF}{{\mathbb{F}}}   % base field
\newcommand{\NN}{{\mathbb{N}}}
\newcommand{\mylabel}[1]{}
\newcommand{\mycomment}[1]{}
\begin{document}
\title[Separating invariants for $2\times 2$ matrices]{Separating invariants for $2\times 2$ matrices}

\thanks{The second author was supported by RFBR 16-31-60111 (mol\_a\_dk), the first and the third authors were supported by RFBR 17-01-00258 and the Presidents Programme Support of Young Russian Scientists (MK-1378.2017.1).}

\author{Ivan Kaygorodov}
\address{Ivan Kaygorodov\\ 
Universidade Federal do ABC, Santo Andre, SP, Brazil}
%{Omsk State University, Omsk, Russia} \\
\email{kaygorodov.ivan@gmail.com  (Ivan Kaygorodov))}

\author{Artem Lopatin}
% * <artem_lopatin@yahoo.com> 2017-11-18T15:40:47.648Z:
%
% ^.
\address{Artem Lopatin\\ 
Sobolev Institute of Mathematics, Omsk Branch, SB RAS, Omsk, Russia}
\email{dr.artem.lopatin@gmail.com (Artem Lopatin))}

\author{Yury Popov}
\address{Yury Popov\\ 
State University of Campinas, 651 Sergio Buarque de Holanda, 13083-859 Campinas, SP, Brazil}
%{Omsk State University, Omsk, Russia} \\
\email{yuri.ppv@gmail.com (Yury Popov))}

\begin{abstract} 
A minimal separating set is found for the algebra of matrix invariants of several $2\times 2$ matrices over an infinite field of arbitrary characteristic.

\noindent{\bf Keywords: } invariant theory, matrix invariants, classical linear groups,  generators, positive characteristic.

\noindent{\bf 2010 MSC: } 16R30; 15B10; 13A50.
\end{abstract}

\maketitle

%========================================================
%S1======================================================
\section{Introduction}\label{section_intro}

\subsection{Definitions} All vector spaces, algebras, and modules are over an infinite field $\FF$ of characteristic $\Char{\FF}\neq2$, unless otherwise stated.  By an algebra we always mean an associative algebra.

To define the algebras of matrix invariants,  we consider  the polynomial algebra 
$$R=R_{n,d}=\FF[x_{ij}(k)\,|\,1\leq i,j\leq n,\, 1\leq k\leq d]$$
together with $n\times n$ {\it generic} matrices
$$X_k=\left(\begin{array}{ccc}
x_{11}(k) & \cdots & x_{1n}(k)\\
\vdots & & \vdots \\
x_{n1}(k) & \cdots & x_{nn}(k)\\
\end{array}
\right).$$
Denote by $\si_t(A)$ the $t^{\rm th}$ coefficient of the characteristic polynomial $\chi_A$ of $A$. As an example, $\tr(A)=\si_1(A)$ and $\det(A)=\si_n(A)$. The action of the general linear group $GL(n)$ over $R$ is defined by the formula: $g\cdot x_{ij}(k) = (g^{-1} X_k g)_{ij}$, where $(A)_{ij}$ stands for the $(i,j)^{\rm th}$ entry of a matrix $A$. The  set of all elements of $R$ that are stable with the respect to the given action is called the algebra of {\it matrix invariants} $R^{GL(n)}$ and this algebra is generated by $\si_t(b)$, where $1\leq t\leq n$ and $b$ ranges over all monomials in the generic matrices matrices 
 $X_1,\ldots,X_d$ (see~\cite{Sibirskii_1968}, \cite{Procesi76}, \cite{Donkin92a}). Note that in characteristic zero case the algebra $R^{GL(n)}$ is generated by $\tr(b)$, where $b$ is as above. The ideal of relations between the generators of $R^{GL(n)}$ was described  in~\cite{Razmyslov74, Procesi76, Zubkov96}.
 
Denote by $H=M(n)\oplus \cdots\oplus M(n)$ the direct sum of $d$ copies of the space $M(n)$ of all matrices $n\times n$ over $\FF$. The elements of $R$ can be interpreted as polynomial functions from $H$ to $\FF$ as follows: $x_{ij}(k)$ sends $u=(A_1,\ldots,A_d)\in H$ to $(A_k)_{i,j}$. For a monomial $c\in R$ denote by $\deg{c}$ its {\it degree} and by $\mdeg{c}$ its {\it multidegree}, i.e., $\mdeg{c}=(t_1,\ldots,t_d)$, where $t_k$ is the total degree of the monomial $c$ in $x_{ij}(k)$, $1\leq i,j\leq n$, and $\deg{c}=t_1+\cdots+t_d$. Similarly we denote the degree and multidegree of a $\NN$-homogeneous ($\NN^d$-homogeneous, respectively) polynomial of $R$, where $\NN$ stands for non-negative integers. Since $\deg{\si_t(X_{i_1}\cdots X_{i_s})}=ts$, the algebra $R^{GL(n)}$ has $\NN$-grading by degrees and $\NN^d$-grading by multidegrees. 

In 2002 Derksen and Kemper~\cite{DerksenKemper_book} introduced the notion of separating invariants as a weaker concept than generating invariants. Given a subset $S$ of $R^{GL(n)}$, we say that elements $u,v$ of $H$ {\it are separated by $S$} if  exists an invariant $f\in S$ with $f(u)\neq f(v)$. If  $u,v\in H$ are separated by $R^{GL(n)}$, then we simply say that they {\it are separated}. A subset $S\subset R^{GL(n)}$ of the invariant ring is called {\it separating} if for any $u, v$ from $H$ that are separated we have that they are separated by $S$.  A subset $S\subset R^{GL(n)}$ is called {\it 0-separating} if for any $u\in H$ such that $u$ and $0$ are separated we have that $u$ and $0$ are separated by $S$.

The main result of this paper is a description of a minimal (by inclusion) separating set for the algebra of matrix $GL(2)$-invariants for any $d$.

%T-----------------------------------------------------
\begin{theo}\label{theo-main}  
\mylabel{theo-main}
The following set is a minimal separating set for the algebra of matrix invariants $R^{GL(2)}$ for every $d\geq1$:
$$\begin{array}{cl}
\tr(X_i), \det(X_i),& 1\leq i\leq d,\\
\tr(X_i X_j), & 1\leq i<j\leq d,\\
\tr(X_i X_j X_k),&  1\leq i<j<k\leq d.\\
\end{array}
$$
\end{theo}%
\medskip

%=========================================================
\subsection{The known results for matrix invariants}
\label{section-known}

A minimal generating set for the algebra of matrix invariants $R^{GL(2)}$ is known, namely:
$$\tr(X_i), \det(X_i), 1\leq i\leq d,\; 
\tr(X_{i_1}\cdots X_{i_k}),  1\leq i_1<\cdots<i_k\leq d,
$$
where $k=2,3$ in case $\Char{\FF}\neq 2$ and $k>0$ in case $\Char{\FF}=2$ (see~\cite{Procesi_1984,DKZ_2002}). It is easy to see that the set
$$\tr(X_i), \det(X_i), 1\leq i\leq d,\; \sum\limits_{i+j=k,\; i<j}\tr(X_i X_j), 3\leq k\leq 2d-1 $$
is a minimal (by inclusion) 0-separating set for $R^{GL(2)}$ (see also~\cite{Teranishi_1988,DKZ_2002}). By Hilbert Theorem, the algebra of invariants is a finitely generated module over the subalgebra generated by a $0$-separated set. This result can be applied to construct a {\it system of parameters} (i.e. an algebraically independent set such that the algebra of invariants is finitely generated module over it) of the algebra of invariants. As an example, for $R_{3,3}^{GL(3)}$ a minimal 
0-separating set is constructed, which is also a system of parameters (see~\cite{Lopatin_Sib}). Similar results are also known for~$R_{3,2}^{GL(3)}$ and $R_{4,2}^{GL(4)}$ (see~\cite{Teranishi_1986, Lopatin_Sib}).

%=========================================================
\subsection{The known general results}
\label{section-known-general}

The algebra of matrix invariants is a partial case of more general construction of an algebra of invariants. Namely, consider a linear
algebraic group $G$ with a regular action over a finite dimensional vector space $V$. Extend this action to the action of $G$ over the coordinate ring $\FF[V]$ by the natural way: $(g\cdot f)(v)=f(g^{-1}v)$ for all $g\in G$, $f\in\FF[V]$ and $v\in V$. Then the algebra of invariants is the following set: $\FF[V]^G=\{f\in\FF[V]\,|\,g\cdot f=f\}$. It is well-known that there always exists a finite separating set (see~\cite{DerksenKemper_book}, Theorem 2.3.15).

In~\cite{Domokos_2007} Domokos established that for a reductive group $G$ and $G$-modules $V,W$ a separating set $S$ for $\FF[W\oplus V^{m}]^G$ can be obtained by the extension of any separating set $S_0$ for $\FF[W\oplus V^{m_0}]^G$, where $m_0=\dim{V} + 1\leq m$. Namely, this extension is defined as follows: a function $f\in\FF[W\oplus V^{m_0}]$ is send to $f\circ \pi_{r_1,\ldots,r_{m_0}}:W\oplus V^{m}\to \FF$, where $1\leq r_1<\cdots<r_{m_0}\leq m$ and $\pi_{r_1,\ldots,r_{m_0}} :W\oplus V^{m}\to W\oplus V^{m_0}$ is the projection map sending $(w,v_1,\ldots,v_m)$ to $(w,v_{r_1},\ldots,v_{r_m})$. Note that $m_0$ does not depend on $m$. A similar result is not valid for sets of generators for matrix invariants (see Section~\ref{section-known}).

For a linear algebraic group $G$ denote by $d_G\in \NN\cup\{+\infty\}$ a minimal constant such that for each $G$-module $V$ as above the invariants of $\FF[V]^G$ are separated by elements of degree less or equal to $d_G$. Kohls and Kraft~\cite{Kohls_Kraft_2010} proved that $d_G$ is finite if and only if the group $G$ is finite. Separating invariants for the finite groups were considered in~\cite{Domokos_2016, Dufresne_2009, Dufresne_Elmer_Kohls_2009, Dufresne_Elmer_Sezer_2013, Dufresne_Kohls_2013, Dufresne_2015, Elmer_Kohls_2012,  Elmer_Kohls_2014, Elmer_Kohls_2016, Kohls_Sezer_2013, Neusel_2009, Sezer_2009, Sezer_2011}.

%=========================================================
%S2=======================================================
\section{Notations}
\label{section_aux}

This section contains some trivial remarks. If for $A,B\in M(n)$ there exists $g\in GL(n)$ such that $gAg^{-1}=B$, then we write $A\sim B$. Denote by $E_{ij}$ the matrix such that the $(i,j)^{\rm th}$ entry is equal to one and the rest of entries are zeros. 
The diagonal matrix with elements $a_1,\ldots,a_n$ we denote by $\diag(a_1,\ldots,a_n)$. The proof of the next lemma is straightforward. 

\begin{lemma}\label{lemma-Jordan} 
\mylabel{lemma-Jordan}
Assume that the field $\FF$ is algebraically closed and $A_1,A_2\in M(2)$, where 
$A_2=\left(
\begin{array}{cc}
\al_2 & \be_2\\
\ga_2 & \de_2\\
\end{array}
\right)$. Then
\begin{enumerate}
\item[(a)] for 
$A_1=\diag(\al_1,\be_1)$ there exists $g\in GL(2)$ such that $g A_1 g^{-1} = \diag(\be_1,\al_1)$ and 
$$g A_2 g^{-1}=\left(
\begin{array}{cc}
\de_2 & \ga_2\\
\be_2 & \al_2\\
\end{array}
\right);$$

\item[(b)] if $\ga_2\neq0$ or $\al_2\neq \de_2$ and 
$$A_1=\left(
\begin{array}{cc}
\al_1 & 1\\
0 & \al_1\\
\end{array}
\right),$$ 
then there exists $g\in GL(2)$ such that $g A_1 g^{-1} = A_1$ and 
$$g A_2 g^{-1}=\left(
\begin{array}{cc}
c & 0\\
\ga_2 & d\\
\end{array}
\right)$$
for some $c,d$.
\end{enumerate}
\end{lemma}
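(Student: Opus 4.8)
The plan is to prove each part by an explicit, direct computation of the conjugating matrix $g$, since the claim is purely about $2\times 2$ matrices and both statements assert the existence of a very specific simultaneous normal form. For part (a), where $A_1=\diag(\al_1,\be_1)$, the natural candidate is the permutation matrix $g=\left(\begin{smallmatrix} 0 & 1\\ 1 & 0\end{smallmatrix}\right)$, which swaps the two basis vectors. First I would check directly that $g A_1 g^{-1}=\diag(\be_1,\al_1)$, which is immediate since conjugating a diagonal matrix by the swap permutation exchanges the diagonal entries. Then I would compute $g A_2 g^{-1}$ for the general $A_2$; conjugation by the swap matrix reverses both the row and the column indices, sending the $(i,j)$ entry to the $(3-i,3-j)$ entry, which produces exactly the stated matrix $\left(\begin{smallmatrix} \de_2 & \ga_2\\ \be_2 & \al_2\end{smallmatrix}\right)$. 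Since $g$ is its own inverse this is a one-line verification.

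For part (b), the setup is that $A_1$ is a single Jordan block with eigenvalue $\al_1$, so the centralizer of $A_1$ in $GL(2)$ consists precisely of the invertible upper-triangular matrices of the form $\left(\begin{smallmatrix} a & b\\ 0 & a\end{smallmatrix}\right)$ with $a\neq 0$. The strategy is to search for $g$ within this centralizer, which automatically guarantees $gA_1g^{-1}=A_1$, and then to choose the remaining parameter $b/a$ so that the $(1,2)$ entry of $gA_2g^{-1}$ becomes zero. Concretely I would write $g=\left(\begin{smallmatrix} 1 & b\\ 0 & 1\end{smallmatrix}\right)$ (scaling by $a$ is irrelevant), compute $gA_2g^{-1}$ explicitly, and read off its $(1,2)$ entry as a polynomial in $b$. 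One finds that this entry has the form $\be_2 + b(\de_2-\al_2) - b^2\ga_2$, a quadratic (or linear) expression in $b$ whose coefficients involve $\ga_2$ and $\de_2-\al_2$, while crucially the $(2,1)$ entry remains equal to $\ga_2$ under this unipotent conjugation. The hypothesis $\ga_2\neq 0$ or $\al_2\neq\de_2$ is exactly what makes this expression non-constant in $b$, so it has a root $b$ over the algebraically closed field $\FF$.

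The step I expect to require the most care is the solvability of this equation for $b$ in part (b), since it splits into cases. If $\ga_2\neq 0$ the expression is a genuine quadratic in $b$ and has a root because $\FF$ is algebraically closed; if $\ga_2=0$ but $\al_2\neq\de_2$ the expression is linear in $b$ with nonzero leading coefficient $(\de_2-\al_2)$ and hence has the unique root $b=\be_2/(\al_2-\de_2)$. I would handle these two cases separately to confirm a suitable $b$ always exists, and then verify that with this $b$ the resulting matrix $gA_2g^{-1}$ indeed has the asserted lower-triangular shape $\left(\begin{smallmatrix} c & 0\\ \ga_2 & d\end{smallmatrix}\right)$, reading off $c$ and $d$ from the diagonal entries. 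Because the whole argument is just two short explicit conjugation computations together with a root-existence argument over an algebraically closed field, I expect no genuine obstacle beyond bookkeeping the case split cleanly.
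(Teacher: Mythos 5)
Your proof is correct: the computations check out (for (b), conjugating by $\left(\begin{smallmatrix}1&b\\0&1\end{smallmatrix}\right)$ fixes $A_1$, leaves the $(2,1)$ entry of $A_2$ equal to $\ga_2$, and turns the $(1,2)$ entry into $\be_2+b(\de_2-\al_2)-b^2\ga_2$, which the hypothesis makes non-constant and hence solvable over the algebraically closed field). The paper omits the proof entirely, declaring it straightforward, and your explicit conjugation argument is exactly the intended one.
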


%=========================================================
%S3=======================================================
\section{The case of three matrices}

Denote the set from the formulation of Theorem~\ref{theo-main} by $S(d)$. 

\begin{lemma}\label{lemma123}
\mylabel{lemma123}
If $d\leq 3$, then the set $S(d)$ is a minimal separating set for $R^{GL(2)}$.
\end{lemma}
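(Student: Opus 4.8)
The plan is to observe first that for $d\le 3$ the set $S(d)$ is exactly the minimal generating set of $R^{GL(2)}$ recalled in Section~\ref{section-known}: the monomials $X_{i_1}\cdots X_{i_k}$ with $i_1<\cdots<i_k$ contribute nothing for $k>3$, and for $k\le 3$ they are precisely the entries of $S(d)$. Since any generating set of $R^{GL(2)}$ is automatically separating — if all generators agree on a pair $u,v\in H$, then every invariant, being a polynomial in the generators, agrees on $u,v$ as well, so $u,v$ are not separated — it follows at once that $S(d)$ is separating. The whole content of the lemma therefore lies in \emph{minimality}: I must show that for each $f\in S(d)$ the set $S(d)\setminus\{f\}$ fails to separate some pair of points that $S(d)$ itself separates.

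To carry this out I will, for every $f\in S(d)$, exhibit a pair $u,v\in H$ with $f(u)\neq f(v)$ but $h(u)=h(v)$ for all $h\in S(d)\setminus\{f\}$; the inequality $f(u)\ne f(v)$ forces $u,v$ to be separated by $R^{GL(2)}$, which is exactly what minimality requires. The low-degree generators are handled by concentrating all the data in the matrices actually appearing in $f$ and setting the remaining matrices to zero, which annihilates every invariant involving a discarded index. For $\tr(X_i)$ (resp.\ $\det(X_i)$) I place two matrices with equal determinant and distinct trace (resp.\ equal trace and distinct determinant) in the $i$-th slot and $0$ elsewhere, e.g.\ $\diag(1,0)$ versus $0$ (resp.\ $E_{12}$ versus $E_{12}-E_{21}$). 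For $\tr(X_iX_j)$ I zero out all but the $i$-th and $j$-th slots and use $(\diag(1,0),\diag(1,2))$ against $(\diag(1,0),\diag(2,1))$: the two matrices $\diag(1,2)$ and $\diag(2,1)$ share all their individual invariants but yield different values of $\tr(X_iX_j)$.

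The one genuinely interesting case, which I expect to be the main obstacle, is $f=\tr(X_1X_2X_3)$, occurring only for $d=3$. Here the lower invariants pin down the symmetric combination $\tr(ABC)+\tr(ACB)$ but leave the antisymmetric one free, so the task is to flip $\tr(ABC)-\tr(ACB)$ while freezing everything else. I will exploit the simultaneous-transpose involution $(A,B,C)\mapsto(A^{T},B^{T},C^{T})$: it fixes $\tr$ and $\det$ of each matrix, fixes each pairwise trace because $\tr(A^{T}B^{T})=\tr((BA)^{T})=\tr(AB)$, yet the triple trace evaluated at the transposed point is $\tr(A^{T}B^{T}C^{T})=\tr((CBA)^{T})=\tr(CBA)=\tr(ACB)$. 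Hence setting $u=(A,B,C)$ and $v=(A^{T},B^{T},C^{T})$ for any $A,B,C$ with $\tr(ABC)\neq\tr(ACB)$ produces a pair separated precisely by $\tr(X_1X_2X_3)$. An explicit choice is $A=E_{12}$, $B=E_{21}$, $C=E_{11}$, for which a direct computation gives $\tr(ABC)=1$ and $\tr(ACB)=0$.

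Collecting these pairs shows that each $f\in S(d)$ is indispensable for separation, so together with the first paragraph the set $S(d)$ is a minimal separating set for $R^{GL(2)}$ when $d\le 3$. The verification reduces throughout to short matrix multiplications, so the only step requiring a genuine idea is the transpose trick for the triple trace; every other pair is built by switching off the irrelevant matrices and invoking the trivial observation that products containing a zero factor contribute nothing.
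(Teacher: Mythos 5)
Your proposal is correct and follows essentially the same route as the paper: both first note that $S(d)$ coincides with the minimal generating set for $d\le 3$ (hence is separating), and then establish minimality by exhibiting, for each element of $S(d)$, an explicit pair of points separated by that element alone. The only cosmetic difference is your transpose involution $(A,B,C)\mapsto(A^{T},B^{T},C^{T})$ for producing the witness pair for $\tr(X_1X_2X_3)$, where the paper simply writes down the concrete pair $(E_{11},E_{21},E_{12})$, $(E_{22},E_{21},E_{12})$.
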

\begin{proof}
Assume that $d\in\{1,2,3\}$. Since in this case $S(d)$ is a (minimal) generating set for $R^{GL(2)}$ (see Section~\ref{section-known}), we have that $S(d)$ is a separating set. It remains to show that $S(d)$ is minimal.

Assume that $d=1$. Then $\tr(X_1)$ does not separate matrices $\diag(1,-1), 0$ and $\det(X_1)$ does not separate matrices $\diag(1,0)$ and $0$.

Assume that $d=2$. Obviously, it is enough to prove that the set $S(2)$ without $\tr(X_1X_2)$ is not separating. Consider 
$u=(E_{12},E_{21})\in H$. Then $\tr(X_1X_2)$ separates $u$ and $0$, but the rest of elements of $S(2)$ do not separate them.

Assume that $d=3$.  Obviously, it is enough to prove that the set $S(3)$ without $\tr(X_1X_2X_3)$ is not separating. Consider 
$u=(E_{11},E_{21},E_{12})$ and $v=(E_{22},E_{21},E_{12})$ from $H$. Then $\tr(X_1X_2X_3)$ separates $u$ and $v$, but the rest of elements of $S(3)$ do not separate them. 
\end{proof}

Note that Section~\ref{section-known} implies that $S(1)$ and $S(2)$ are minimal as 0-separating sets but $S(3)$ is not a minimal as 0-separating set.

%=========================================================
%S4=======================================================
\section{The case of four matrices}

It is easy to see that if the assertion of Proposition~\ref{prop4} (see below) is valid over the algebraic closure of the field $\FF$,
then it is also valid over the field $\FF$. Therefore, in this section we assume that the field $\FF$ is algebraically closed.

%--------------------------------
\begin{prop}\label{prop4}
\mylabel{prop4}
Assume that $d=4$. Consider $u=(A_1,A_2,A_3,A_4)$ and $v=(B_1,B_2,B_3,B_4)$ from $H$ such that for every $f\in S(4)$ we have $f(u)=f(v)$. Then for $h=\tr(X_1\cdots X_4)$ we have $h(u)=h(v)$.
\end{prop}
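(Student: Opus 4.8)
The plan is to prove the sharper, matrix-level statement that for \emph{any} four $2\times2$ matrices $A,B,C,D$ over $\FF$ the invariant $\tr(ABCD)$ equals one fixed polynomial in the single traces $\tr(M)$, the double traces $\tr(MN)$, and the triple traces $\tr(MNP)$ formed from among $A,B,C,D$; granting such an identity, the proposition is immediate. Indeed, once the triple traces are brought to increasing index order, every monomial of that polynomial is a product of elements of $S(4)$, so if $f(u)=f(v)$ for all $f\in S(4)$ then $h(u)=h(v)$ for $h=\tr(X_1\cdots X_4)$. The mere existence of such an identity is already guaranteed by the fact, recalled in Section~\ref{section-known}, that $S(4)$ generates $R^{GL(2)}$ when $\Char\FF\neq2$; the actual work is to exhibit it explicitly.

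The engine is the linearized Cayley--Hamilton identity for $2\times2$ matrices. Starting from $A^2=\tr(A)A-\det(A)I$ and polarizing $A\mapsto A+B$ gives
$$AB+BA=\tr(A)B+\tr(B)A-\bigl(\tr(A)\tr(B)-\tr(AB)\bigr)I,$$
and multiplying this on the right by $C$ and taking traces yields the degree-three identity
$$\tr(ABC)+\tr(ACB)=\tr(A)\tr(BC)+\tr(B)\tr(AC)+\tr(C)\tr(AB)-\tr(A)\tr(B)\tr(C).$$
Multiplying the quadratic identity instead by a word of length two and taking the trace produces relations of the shape $\tr(W)+\tr(W')=(\text{products of shorter traces})$, where $W'$ is obtained from the cyclic word $W$ by transposing two adjacent letters. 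Applied to the six cyclically distinct arrangements of $A,B,C,D$, these relations link the six quadruple traces to expressions in traces of products of length at most three.

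Concretely, I would list the six cyclic representatives and, for each adjacent transposition, record the associated relation. Choosing two relations of the form $\tr(ABCD)+\tr(W_i)=(\cdots)$, $i=1,2$, together with one further relation $\tr(W_1)+\tr(W_2)=(\cdots)$ between the two auxiliary words, I would add the first two, subtract the third, and solve for $2\tr(ABCD)$. The single division by $2$ at this step is exactly where the hypothesis $\Char\FF\neq2$ enters. Finally I would invoke the degree-three identity above to rewrite every triple trace $\tr(MNP)$ appearing in a decreasing order as the increasing trace $\tr(X_iX_jX_k)$ with $i<j<k$, so that each building block genuinely lies in $S(4)$.

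The main obstacle is bookkeeping rather than conceptual: one must keep track of the six cyclically inequivalent quadruple words and choose enough independent symmetrization relations to isolate $\tr(ABCD)$ rather than merely a sum of two quadruple traces. A particular pitfall is that the reversal symmetry $\tr(X_1\cdots X_k)=\tr(X_k\cdots X_1)$ is \emph{not} available for general matrices, so there really are six quadruple classes and not three; overlooking this would collapse the linear system and lose the term $\tr(ABCD)$ one is trying to extract. Once the system closes and the explicit identity is produced, the proposition follows by evaluating it at $u$ and at $v$ and comparing.
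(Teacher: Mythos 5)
There is a genuine gap, and it is located exactly where you flag the division by $2$. The proposition is \emph{not} restricted to $\Char\FF\neq2$: the paper's abstract and the proof of Theorem~\ref{theo-main} make clear that the whole point of the result is the case $\Char\FF=2$, and the paper's own proof of Proposition~\ref{prop4} disposes of $\Char\FF\neq2$ in one line (precisely because $S(4)$ generates $R^{GL(2)}$ there) and then spends all its effort on characteristic two. Your strategy --- exhibiting a universal polynomial identity expressing $\tr(ABCD)$ in traces of words of length at most three --- cannot succeed in characteristic two: by the Domokos--Kuzmin--Zubkov description recalled in Section~\ref{section-known}, $\tr(X_1X_2X_3X_4)$ is a member of the \emph{minimal} generating set of $R^{GL(2)}$ when $\Char\FF=2$, so it is not a polynomial in the elements of $S(4)$ and no identity of the kind you seek exists. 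The obstruction is not bookkeeping; the linear system you set up genuinely only determines $2\tr(ABCD)$, and in characteristic two that is no information at all. So your argument, once completed, reproves the known generation statement in odd characteristic but leaves the actual content of the proposition untouched.

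The paper's route is necessarily different in kind: separation is a pointwise statement about the specific tuples $u$ and $v$, not an identity in the invariant ring, so one may use non-polynomial operations. Concretely, the authors put $A_1$ (and then $B_1$) into Jordan form using Lemma~\ref{lemma-Jordan}, and in each configuration use the hypothesized equalities $(T_i)$, $(D_i)$, $(T_{ij})$, $(T_{ijk})$ to solve for the entries of the $B_i$ in terms of the entries of the $A_i$ --- dividing by entries such as $b_7$ or $a_1-a_4$ after splitting into cases according to whether they vanish --- and then verify $\tr(A_1A_2A_3A_4)=\tr(B_1B_2B_3B_4)$ directly, often by exhibiting $Q$ as an explicit combination of the differences $T_{\dots}$ with coefficients that are rational in the matrix entries. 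If you want to salvage your approach, you would need to replace the global trace identity by this kind of case-by-case pointwise analysis in characteristic two.
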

\bigskip

We split the proof of the proposition into several lemmas. 
By the formulation of the proposition, 
\begin{enumerate}
\item[($T_i$)] $\tr(A_i)=\tr(B_i)$, $1\leq i\leq 4$;

\item[($D_i$)] $\det(A_i)=\det(B_i)$, $1\leq i\leq 4$;

\item[($T_{ij}$)] $\tr(A_i A_j)=\tr(B_i B_j)$, $1\leq i<j\leq 4$;

\item[($T_{ijk}$)] $\tr(A_i A_j A_k)=\tr(B_i B_j B_k)$, $1\leq i<j<k\leq 4$.
\end{enumerate}

We have to show that
\begin{enumerate}
\item[($Q$)] $\tr(A_1\cdots A_4) = \tr(B_1\cdots B_4)$.
\end{enumerate}
If ($T$) $f=h$ is one of the above equalities, then we write $T$ for $f-h$. As an example, $T_1=\tr(A_1)-\tr(B_1)$.

We denote the entries of the matrices $A_1,\ldots,A_4$ as follows:
$$A_1=\left(
\begin{array}{cc}
a_1 & a_2\\
a_3 & a_4\\
\end{array}
\right),\quad
A_2=\left(
\begin{array}{cc}
a_5 & a_6\\
a_7 & a_8\\
\end{array}
\right),\quad
A_3=\left(
\begin{array}{cc}
a_9 & a_{10}\\
a_{11} & a_{12}\\
\end{array}
\right),\quad
A_4=\left(
\begin{array}{cc}
a_{13} & a_{14}\\
a_{15} & a_{16}\\
\end{array}
\right).
$$
Similarly, substituting $a_i \to b_i$ for all $i$ we denote entries of the matrices $B_1,\ldots,B_4$ by $b_1,\ldots,b_{16}$.

\begin{remark}\label{remark-Jordan} \mylabel{remark-Jordan}  
(1) Since elements of $S(4)$ are invariants with the respect to the action of $GL(2)$ over $H$ diagonally by conjugation and the field is algebraically closed, we can assume that either 
$A_1=\diag(\al,\be)$ or  
$$A_1=\left(
\begin{array}{cc}
\ga & 1\\
0 & \ga\\
\end{array}
\right)$$
for some $\al,\beta,\ga$ from $\FF$. 
\medskip

(2) By part~(b) of Lemma~\ref{lemma-Jordan}, we can assume that in the second case of part~(1) we have that either $(A_2)_{12}=0$ or $(A_2)_{21}=0$ and $(A_2)_{11}=(A_2)_{22}$.  
\end{remark}

\begin{remark}\label{remark-generators} \mylabel{remark-generators}
Denote by $G(d)$ the minimal generating set from Section~\ref{section-known}; in particular, $G(3)=S(3)$. Consider $u, v\in H$ such that $u$ and $v$ are not separated by elements of $G(d)$. Then $u$ and $v$ are not separated by any invariant $f$ of degree $d$, since $f$ is a polynomial in elements of $G(d)$.
\end{remark}
\bigskip

Remark~\ref{remark-generators} immediately implies the next remark:

\begin{remark}\label{remark-generators2} \mylabel{remark-generators2} Assume that $d=4$ and $u,v\in H$ are not separated by elements of $S(4)$. Then $u,v$ are not separated by invariants $\tr(X_i X_j X_k)$ for any pairwise different $1\leq i,j,k\leq 4$. If we also have that $u,v$ are not separated by $\tr(X_1\cdots X_4)$, then they are not separated by the invariant $\tr(X_{\si(1)}\cdots X_{\si(4)})$ for any permutation $\si\in S_4$.
\end{remark}

%\begin{remark}\label{remark-generators2} \mylabel{remark-generators2} Assume $u,v\in H$ are not separated by elements of $S(d)$, where $d\geq4$. Then $u,v$ are not separated by invariants $\tr(X_i X_j X_k)$ for any pairwise different $1\leq i,j,k\leq 4$. If we also have that $u,v$ are not separated by $\tr(X_1\cdots X_4)$, then they are not separated by the invariant $\tr(X_{\si(1)}\cdots X_{\si(4)})$ for any permutation $\si\in S_4$.
%\end{remark}

%--------------------------------
\begin{lemma}\label{lemma1-prop4}
\mylabel{lemma1-prop4}
Assume that the condition of Proposition~\ref{prop4} holds and $A_1=0$. Then the equality ($Q$) holds.
\end{lemma}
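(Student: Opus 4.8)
The plan is to exploit that the hypothesis $A_1=0$ makes the left-hand side of $(Q)$ vanish: indeed $\tr(A_1 A_2 A_3 A_4)=\tr(0\cdot A_2 A_3 A_4)=0$. So it suffices to prove that $\tr(B_1 B_2 B_3 B_4)=0$ as well, and the whole argument is about pinning down the structure of the $B_i$.

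First I would extract information about $B_1$ from the single-index conditions. From $(T_1)$ and $(D_1)$ together with $A_1=0$ we get $\tr(B_1)=0$ and $\det(B_1)=0$, so by Cayley--Hamilton $B_1^2=0$ and $B_1$ is nilpotent. Since $\FF$ is algebraically closed in this section, $B_1\sim 0$ or $B_1\sim E_{12}$. All elements of $S(4)$ and the function $h=\tr(X_1\cdots X_4)$ are invariant under simultaneous conjugation of $(B_1,B_2,B_3,B_4)$ by $GL(2)$; hence replacing $v$ by a conjugate preserves every hypothesis of Proposition~\ref{prop4} and does not change $\tr(B_1 B_2 B_3 B_4)$. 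Thus I may assume $B_1\in\{0,\,E_{12}\}$. If $B_1=0$ the claim is immediate, so the real case is $B_1=E_{12}$.

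In this case I would use the pairwise conditions $(T_{1j})$ for $j=2,3,4$. Because $A_1=0$ forces $\tr(A_1 A_j)=0$, we obtain $\tr(E_{12}B_j)=0$; a direct computation gives $\tr(E_{12}B_j)=(B_j)_{21}$, so $(B_j)_{21}=0$ for $j=2,3,4$. Therefore $B_2,B_3,B_4$ are all upper triangular, and their product $B_2 B_3 B_4$ is again upper triangular. Consequently $E_{12}\cdot(B_2 B_3 B_4)$ is strictly upper triangular, whence $\tr(B_1 B_2 B_3 B_4)=0=\tr(A_1 A_2 A_3 A_4)$, which is exactly $(Q)$.

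I do not expect a serious obstacle here: the argument needs only the degree-one and degree-two data $(T_1),(D_1),(T_{1j})$ and never the triple traces. The one point deserving care is the reduction to $B_1=E_{12}$, which relies on the $GL(2)$-invariance of every function involved (so that conjugating $v$ is harmless); once $B_2,B_3,B_4$ are forced to be upper triangular, the vanishing of the trace of a strictly upper-triangular product is automatic.
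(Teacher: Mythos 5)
Your proposal is correct and follows essentially the same route as the paper: the paper likewise conjugates $v$ so that $B_1$ is upper triangular, uses $(T_1)$ and $(D_1)$ to force $B_1$ to be a scalar multiple of $E_{12}$, and then uses $(T_{12})$, $(T_{13})$, $(T_{14})$ to make $B_2,B_3,B_4$ upper triangular, whence $\tr(B_1B_2B_3B_4)=0$. The only cosmetic difference is that you normalize $B_1$ all the way to $E_{12}$ via nilpotency and Cayley--Hamilton, while the paper keeps $B_1=b_2E_{12}$ and splits on whether $b_2=0$.
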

\begin{proof} By part~(1) of Remark~\ref{remark-Jordan} we can assume that 
$a_7 = 0$. Similarly, we can assume that $b_3 = 0$. Considering equalities ($T_1$)--($T_4$) we obtain that 
$$ b_4 = -b_1,\quad  b_8 = a_5 + a_8 - b_5, $$
$$ b_{12} = a_{9} + a_{12} - b_{9},\quad b_{16} = a_{13} + a_{16} - b_{13},$$
respectively. Equality ($D_1$) implies that $b_1 = 0$. In the case of $b_2=0$, we have that ($Q$) holds. Then without loss of generality we can assume that 
$b_2\neq 0$. Equalities ($T_{12}$), ($T_{13}$) and ($T_{14}$), respectively, imply that $b_7=b_{11}=b_{15}=0$, respectively. Then ($Q$) holds.
\end{proof}

%--------------------------------
\begin{lemma}\label{lemma2-prop4}
\mylabel{lemma2-prop4}
Assume that the condition of Proposition~\ref{prop4} holds, $A_1$ is scalar and $B_1$ is diagonal. Then the equality ($Q$) holds.
\end{lemma}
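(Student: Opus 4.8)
The plan is to exploit the hypotheses on $A_1$ and $B_1$ to show that $B_1$ is forced to coincide with $A_1$ as a scalar matrix; once both $A_1$ and $B_1$ are scalar, equality ($Q$) reduces immediately to the triple-trace condition ($T_{234}$).

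First I would write $A_1=\gamma I$ for some $\gamma\in\FF$, so that $\tr(A_1)=2\gamma$ and $\det(A_1)=\gamma^2$. Conditions ($T_1$) and ($D_1$) then give $\tr(B_1)=2\gamma$ and $\det(B_1)=\gamma^2$, whence the characteristic polynomial of $B_1$ is
$$\chi_{B_1}(t)=t^2-2\gamma t+\gamma^2=(t-\gamma)^2.$$
Thus both eigenvalues of $B_1$ equal $\gamma$. Since $B_1$ is assumed diagonal, its diagonal entries are precisely its eigenvalues, so $B_1=\diag(\gamma,\gamma)=\gamma I$ is scalar as well. This is the key step: the scalarity of $A_1$ together with the matching of $\tr$ and $\det$ forces a repeated eigenvalue, and the diagonality hypothesis upgrades this to $B_1=A_1$.

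With both $A_1=B_1=\gamma I$ scalar, I factor the scalar out of each quadruple product:
$$\tr(A_1A_2A_3A_4)=\gamma\,\tr(A_2A_3A_4),\qquad \tr(B_1B_2B_3B_4)=\gamma\,\tr(B_2B_3B_4).$$
Condition ($T_{234}$) asserts $\tr(A_2A_3A_4)=\tr(B_2B_3B_4)$, and substituting this into the two displayed identities yields ($Q$) at once. The argument requires no case analysis, so there is no substantial obstacle beyond the eigenvalue observation; I note in passing that $\Char\FF\neq2$ plays no role in this particular lemma, as the factorization of $\chi_{B_1}$ as a perfect square is valid over any field.
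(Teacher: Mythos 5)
Your proof is correct and follows essentially the same route as the paper: the paper likewise uses ($T_1$) and ($D_1$) to force $b_1=a_1$ (equivalently, $(b_1-a_1)^2=0$, your characteristic-polynomial observation in coordinates), concludes $B_1=A_1$ is scalar, and then finishes via the identity $Q=a_1T_{234}$, which is exactly your factoring of the scalar out of the quadruple traces. Your remark that the argument is characteristic-independent is also consistent with the paper's usage of this lemma in the $\Char\FF=2$ case.
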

\begin{proof} We have $A_1=\diag(a_1,a_1)$ and $B_1=\diag(b_1,b_4)$.
Considering equalities ($T_1$)--($T_4$) we obtain that 
$$ b_4 = 2 a_1 - b_1,\quad  b_8 = a_5 + a_8 - b_5, $$
$$ b_{12} = a_{9} + a_{12} - b_{9},\quad b_{16} = a_{13} + a_{16} - b_{13},$$
respectively. Hence ($D_1$) implies $b_1 = a_1$ and the matrix $B_1=A_1$ is scalar. The equality $Q=a_1 T_{234}$ concludes the proof. 
\end{proof}

%--------------------------------
\begin{lemma}\label{lemma3-prop4}
\mylabel{lemma3-prop4}
Assume that the condition of Proposition~\ref{prop4} holds and $A_1, B_1$ are diagonal. Then the equality ($Q$) holds.
\end{lemma}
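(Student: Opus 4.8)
The plan is to use that, after reductions, $A_1=B_1$ is a \emph{diagonal} matrix with \emph{distinct} eigenvalues: then every invariant that involves $A_1$ becomes a linear expression in the entries of the remaining matrices with coefficient matrix of determinant $a_1-a_4\neq0$, which lets me recover those entries (and the needed products of them) one pair at a time. First I would dispose of the scalar subcase: if $A_1$ is scalar, then $(Q)$ already follows from Lemma~\ref{lemma2-prop4}, so I may assume $A_1=\diag(a_1,a_4)$ with $a_1\neq a_4$. From $(T_1)$ and $(D_1)$ the multisets $\{a_1,a_4\}$ and $\{b_1,b_4\}$ coincide, and after applying part~(a) of Lemma~\ref{lemma-Jordan} to the tuple $(B_1,\ldots,B_4)$, which swaps the diagonal of $B_1$ while preserving every invariant, I may assume $B_1=A_1$.

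Next I would read off the diagonal entries. Because $A_1$ is diagonal, $\tr(A_1A_i)=a_1\,(A_i)_{11}+a_4\,(A_i)_{22}$ is linear in the diagonal entries of $A_i$; together with $\tr(A_i)=(A_i)_{11}+(A_i)_{22}$ this is a $2\times2$ system in $\big((A_i)_{11}-(B_i)_{11},\,(A_i)_{22}-(B_i)_{22}\big)$ whose determinant is $a_1-a_4\neq0$. Hence, for $i=2,3,4$, the pair $(T_i),(T_{1i})$ forces the diagonal entries of $A_i$ and $B_i$ to agree, that is $a_5=b_5$, $a_8=b_8$, $a_9=b_9$, $a_{12}=b_{12}$, $a_{13}=b_{13}$, $a_{16}=b_{16}$.

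I would then recover the relevant off-diagonal products by the same device one degree higher. For a pair $2\leq i<j\leq4$, after cancelling the diagonal contributions (already known to match), $(T_{ij})$ says that the \emph{sum} of the two cross products is preserved, e.g. $a_6a_{11}+a_7a_{10}=b_6b_{11}+b_7b_{10}$ from $(T_{23})$. Adjoining the triple trace $(T_{1ij})$, which since $A_1$ is diagonal equals $a_1$ times the $(1,1)$-cross product plus $a_4$ times the $(2,2)$-cross product, gives a second linear relation whose coefficient determinant is again $a_1-a_4\neq0$; so each individual cross product is preserved. Running this over $(2,3),(2,4),(3,4)$ with the triple traces $(T_{123}),(T_{124}),(T_{134})$ yields $a_6a_{11}=b_6b_{11}$, $a_7a_{10}=b_7b_{10}$, $a_6a_{15}=b_6b_{15}$, $a_7a_{14}=b_7b_{14}$, $a_{10}a_{15}=b_{10}b_{15}$, $a_{11}a_{14}=b_{11}b_{14}$. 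Finally I would expand $(Q)=\tr(A_1A_2A_3A_4)=a_1\,(A_2A_3A_4)_{11}+a_4\,(A_2A_3A_4)_{22}$; multiplying out, each of the eight resulting monomials is a product of diagonal entries times at most one cross product, and every such factor has just been shown to coincide for the $A$'s and the $B$'s, so $(Q)$ holds.

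The determinant conditions $(D_i)$ for $i\geq2$ are not even needed, and the only genuine point is the bookkeeping in the last step: one must check that precisely the six cross products controlled by the three triple traces through $A_1$ are the ones appearing in the degree-four trace, and that each linear system encountered is nonsingular. Both facts hinge on $a_1\neq a_4$, which is exactly why the diagonal, distinct-eigenvalue case is the easy backbone on which the remaining (scalar and Jordan-block) cases of Proposition~\ref{prop4} rest.
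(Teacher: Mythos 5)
Your argument is correct, and it is genuinely different from (and tidier than) the paper's. The paper, after reducing to $A_1=\diag(a_1,a_4)$ with $a_1\neq a_4$, splits into the cases $b_1=a_1$ and $b_1=a_4$ and, inside each, into $b_7=0$ and $b_7\neq0$; it invokes $(D_2)$ to solve for $b_6$, and verifies $(Q)$ by exhibiting explicit identities expressing $Q$ as a linear combination of the differences $T_{34},T_{123},T_{124},T_{134},T_{234}$. You instead collapse the two eigenvalue cases at the outset by conjugating $v$ with the permutation matrix of Lemma~\ref{lemma-Jordan}(a) so that $B_1=A_1$, and then observe that every quantity entering the expansion $\tr(A_1A_2A_3A_4)=a_1(A_2A_3A_4)_{11}+a_4(A_2A_3A_4)_{22}$ --- the six diagonal entries of $A_2,A_3,A_4$ and the six cross products $(A_i)_{12}(A_j)_{21}$, $(A_i)_{21}(A_j)_{12}$ --- is pinned down by a nonsingular $2\times2$ linear system (pairing $(T_i)$ with $(T_{1i})$, and $(T_{ij})$ with $(T_{1ij})$) whose determinant is $a_1-a_4\neq0$; I checked that the eight monomials of the degree-four trace are indeed each a product of these controlled quantities, so the final step closes. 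What your approach buys is uniformity (no case split on vanishing of $b_7$, no use of $(D_2)$, and the same scheme would handle longer products $\tr(A_1A_{i_1}\cdots A_{i_k})$ whenever all cross products through $A_1$ are available); what the paper's explicit $T$-identities buy is that they certify $(Q)$ as a polynomial consequence of the hypotheses in a form reusable in the other lemmas. The only point worth making explicit in your write-up is that the scalar case must also cover $B_1$ scalar with $A_1$ non-scalar, but as you implicitly note this cannot occur once $(T_1)$ and $(D_1)$ force $\{b_1,b_4\}=\{a_1,a_4\}$ with $a_1\neq a_4$.
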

\begin{proof}
Applying Lemma~\ref{lemma2-prop4} we can assume that $A_1$ and $B_1$ are not scalars. Denote $A_1=\diag(a_1,a_4)$, $B_1=\diag(b_1,b_4)$, where $a_1\neq a_4$ and $b_1\neq b_4$.
Equalities ($T_1$)--($T_4$) imply that 
$$ b_4 = a_1 + a_4 - b_1,\quad  b_8 = a_5 + a_8 - b_5, $$
$$ b_{12} = a_{9} + a_{12} - b_{9},\quad b_{16} = a_{13} + a_{16} - b_{13},$$
respectively. We consider several possibilities for entries of the matrices. 

\begin{enumerate}
\item[(1)] {\it Assume $b_1 = a_1$}. Since $a_1\neq a_4$, equalities ($T_{12}$), ($T_{13}$), ($T_{14}$) imply that 
$$b_5 = a_5,\quad b_9 = a_9\;\; \text{ and }\;\; b_{13} = a_{13},$$
respectively.

\begin{enumerate}
\item[(1.1)] {\it Let $b_7=0$}. 
It follows from equality ($D_2$) that $a_6=0$ or $a_7=0$.  Since $A_1$ is diagonal, we can apply part (a) of Lemma~\ref{lemma-Jordan} (see also part~(1) of Remark~\ref{remark-Jordan}\mycomment{1}{Here we use the reference to part~(1) of Remark~\ref{remark-Jordan} just to show how to apply part (a) of Lemma~\ref{lemma-Jordan}. Namely, all elements of $S(4)$ and $Q$ are invariants with the respect to the action of $GL(2)$ diagonally by conjugation over $H$, then we can apply Lemma~\ref{lemma-Jordan}})  to the pair $(A_1,A_2)$ and assume that $a_7=0$.
Since $a_1\neq a_4$ the following equality holds:
$$ Q = \frac{1}{a_1-a_4}\left((a_1 a_5 - a_4 a_8) T_{134} \frac{}{} - a_1 a_4 (a_5 - a_8) T_{34}\right) + a_{13} T_{123} + a_{12} T_{124}.  $$
Hence $(Q)$ holds.

\item[(1.2)] {\it Let $b_7\neq 0$}. Thus equalities ($T_{23}$), ($T_{24}$), ($D_2$), respectively, imply that 
$$\begin{array}{ccl}
b_{10} & = & (a_6 a_{11} + a_7 a_{10} - b_6 b_{11})/ b_7, \\
b_{14} & = &(a_6 a_{15} + a_7 a_{14} - b_6 b_{15})/b_7, \\
b_6  & = & a_6 a_7/b_7,\\
\end{array}$$
respectively. In case $a_6 = 0$ the equality
$$Q = a_4 T_{234} + a_5 T_{134} - a_4 a_5 T_{34}$$
completes the proof. Thus we assume that $a_6\neq 0$. Since $a_1\neq a_4$, equalities ($T_{123}$) and ($T_{124}$), respectively, imply that
$$a_{11} = a_7 b_{11}/b_7 \;\;\text{ and }\;\; a_{15} = a_7 b_{15} / b_7,$$
respectively. Thus ($Q$) holds.
\end{enumerate}

\item[(2)] {\it Assume $b_1\neq a_1$}. Then it follows from ($D_1$) that $b_1 = a_4$. 
Since $a_1\neq a_4$, we equalities ($T_{12}$), ($T_{13}$) and ($T_{14}$), respectively, imply that
$$ b_5 = a_8,\quad 
b_9 = a_{12} \;\;\text{ and }\;\;
b_{13} = a_{16},$$
respectively.

\begin{enumerate}
\item[(2.1)] {\it Let $b_7=0$}. It follows from equality ($D_2$) that $a_6=0$ or $a_7=0$.  Since $A_1$ is diagonal, we can apply part (a) of Lemma~\ref{lemma-Jordan} to the pair $(A_1,A_2)$ and assume that $a_6=0$. Thus
$$ Q = a_4 T_{234} + a_5 T_{134} - a_4 a_5 T_{34},$$ 
i.e., ($Q$) holds.

\item[(2.2)] {\it Let $b_7\neq0$}. Then equalities ($D_2$), ($T_{23}$) and ($T_{24}$), respectively, imply that
$$\begin{array}{ccl}
b_6 & = & a_6 a_7/b_7,\\
b_{10} & = & (a_6 a_{11} + a_7 a_{10}  - b_6 b_{11})/b_7,\\
b_{14} & = &(a_6 a_{15} + a_7 a_{14}  - b_6 b_{15})/ b_7,
\end{array}$$
respectively. 
If $a_7=0$, then he equality
$$Q = a_4 T_{234} + a_5 T_{134} - a_4 a_5 T_{34},$$
completes the proof. Thus we assume that $a_7\neq 0$. 
Since $a_1\neq a_4$, the following equalities follow from ($T_{123}$) and ($T_{124}$), respectively:
$$a_{10} = a_6 b_{11}/b_7\;\;\text{ and }\;\;a_{14} = a_{6} b_{15}/ b_7.$$
Thus the equality ($Q$) is valid.
\end{enumerate}
\end{enumerate}
\end{proof}

%--------------------------------
\begin{lemma}\label{lemma4-prop4}
\mylabel{lemma4-prop4}
Assume that the condition of Proposition~\ref{prop4} holds,  $$A_1=\left(
\begin{array}{cc}
a_1 & 1\\
0 & a_1\\
\end{array}
\right) \quad\text{ and }\quad 
B_1=\left(
\begin{array}{cc}
b_1 & 0\\
0 & b_4\\
\end{array}
\right).$$
Then the equality ($Q$) holds.
\end{lemma}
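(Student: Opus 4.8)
The plan is to follow the same strategy as in Lemma~\ref{lemma3-prop4}: use the normalizing conditions from the two Remarks to reduce the number of free entries, extract linear constraints from the trace and determinant equalities, and then exhibit an explicit linear combination of the known equations $T_i, D_i, T_{ij}, T_{ijk}$ that equals $Q$. First I would record the consequences of $(T_1)$ and $(D_1)$ for the matrix $B_1$: since $A_1$ has trace $2a_1$ and determinant $a_1^2$, while $B_1=\diag(b_1,b_4)$, the equalities $(T_1)$ and $(D_1)$ force $b_1+b_4=2a_1$ and $b_1 b_4=a_1^2$, whence $b_1=b_4=a_1$, i.e.\ $B_1$ is scalar. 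But then $B_1$ is scalar while $A_1$ is a nontrivial Jordan block, so $A_1$ and $B_1$ are \emph{not} conjugate; the point is that this is still consistent with all invariants of $S(4)$ agreeing, because $S(4)$ cannot separate a Jordan block from a scalar matrix with the same eigenvalue.

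Having pinned down $B_1=\diag(a_1,a_1)$, the next step is to invoke the normalization of Remark~\ref{remark-Jordan}: by part~(2) I may assume that either $(A_2)_{12}=a_6=0$ or else $(A_2)_{21}=a_7=0$ together with $(A_2)_{11}=(A_2)_{22}$, i.e.\ $a_5=a_8$. I would treat these two cases separately. In each case I would then feed the trace equalities $(T_2),(T_3),(T_4)$ to solve for the diagonal entries $b_8,b_{12},b_{16}$ of $B_2,B_3,B_4$ in terms of the $a_i$ and the remaining $b_i$, exactly as in the earlier lemmas, and use $(T_{1j})$ to control the first-row/first-column interaction of $B_1$ with $B_j$. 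Because $B_1$ is scalar, $\tr(B_1 B_j)=a_1\tr(B_j)$ and $\tr(B_1 B_j B_k)=a_1\tr(B_j B_k)$, so the equalities $(T_{1j})$ and $(T_{1jk})$ reduce to the lower-order relations among $B_2,B_3,B_4$ and supply no genuinely new information about $B_1$; the analogous statement on the $A$-side, however, involves the off-diagonal $1$ in the Jordan block $A_1$, and this asymmetry is what must be absorbed.

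The heart of the matter is the explicit identity for $Q$. Following the pattern of the displayed formulas in Lemma~\ref{lemma3-prop4}, I expect $Q$ to be expressible as an $\FF$-linear combination
\[
Q = a_1\,T_{234} + \lambda_{134}T_{134} + \lambda_{124}T_{124} + \lambda_{123}T_{123} + (\text{correction terms in }T_{jk}),
\]
where the leading term $a_1 T_{234}$ comes from the scalar part $a_1\cdot\mathrm{id}$ of $A_1$ (compare the identity $Q=a_1 T_{234}$ used in Lemma~\ref{lemma2-prop4}), and the remaining coefficients $\lambda_{\bullet}$ are polynomials in the entries of $A_2,A_3,A_4$ chosen so that the contribution of the off-diagonal $(A_1)_{12}=1$ to $\tr(A_1A_2A_3A_4)$ is cancelled against the corresponding terms produced by the $T_{1jk}$-relations on the $A$-side. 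Concretely, writing $A_1=a_1\,\mathrm{id}+E_{12}$, one has $\tr(A_1 A_2 A_3 A_4)=a_1\tr(A_2A_3A_4)+\tr(E_{12}A_2A_3A_4)$, and $\tr(E_{12}A_2A_3A_4)$ is just the $(2,1)$-entry of $A_2A_3A_4$; the task is to match this single scalar on both sides using the already-established relations among the lower-order invariants.

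I expect the main obstacle to be the bookkeeping in this final identity: unlike the diagonal case, the Jordan block injects the extra term $(A_2A_3A_4)_{21}$ that has no symmetric counterpart, so one must verify that the constraints coming from $(T_{1j})$, $(T_{1jk})$ and $(D_2)$ are \emph{exactly} strong enough to force $(A_2A_3A_4)_{21}=(B_2B_3B_4)_{21}$ (recall $(B_1)_{12}=0$, so on the $B$-side this entry contributes nothing to $\tr(B_1\cdots B_4)$, whereas $B_1$ being scalar makes $\tr(B_1B_2B_3B_4)=a_1\tr(B_2B_3B_4)$). Thus $Q$ splits as $a_1 T_{234}$ plus the single obstruction $(A_2A_3A_4)_{21}$, and the two normalization cases $a_6=0$ and $(a_7=0,\ a_5=a_8)$ are precisely what one needs to kill this obstruction via the available equalities; I would carry out the elimination in each case and read off the coefficients $\lambda_{\bullet}$ from the resulting linear system, just as was done in the branches of Lemma~\ref{lemma3-prop4}.
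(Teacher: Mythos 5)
Your reduction is correct and, once completed, actually yields a cleaner argument than the paper's. You rightly deduce $B_1=\diag(a_1,a_1)$ from $(T_1)$ and $(D_1)$ (the roots of $t^2-2a_1t+a_1^2$ coincide in any characteristic), and the decomposition $Q=a_1T_{234}+(A_2A_3A_4)_{21}$, using $\tr(E_{12}M)=M_{21}$, $\tr(B_1B_2B_3B_4)=a_1\tr(B_2B_3B_4)$ and $T_{234}=0$, is exactly the right way to isolate the obstruction. The one step you leave as ``elimination to be carried out in each case'' is in fact immediate and needs neither the case split on $a_6$ nor any coefficients $\lambda_\bullet$: since $B_1$ is scalar, $(T_{1j})$ combined with $(T_j)$ reads $a_1\tr(A_j)+(A_j)_{21}=a_1\tr(A_j)$, so $(A_j)_{21}=0$ for $j=2,3,4$; hence $A_2,A_3,A_4$ are all upper triangular, $(A_2A_3A_4)_{21}=0$, and $Q=0$. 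The paper derives the same vanishings $a_7=a_{11}=a_{15}=0$ but then takes a more roundabout route: in the case $a_6=0$ it observes that $A_2$ is diagonal and reduces to Lemma~\ref{lemma3-prop4} via Remark~\ref{remark-generators2}, and in the remaining case it further normalizes $B_2$ by conjugation, extracts $b_5=a_5$ from $(D_2)$, and exhibits the explicit identity $Q=a_1a_5T_{34}+a_1b_{13}T_{23}+a_1(a_9+a_{12}-b_9)T_{24}$. Your route avoids both the reordering argument and the explicit linear combination; to make it a proof rather than a plan, state the conclusion $(A_j)_{21}=0$ explicitly instead of deferring it to an unspecified elimination.
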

\begin{proof}
Applying part (b) of Lemma~\ref{lemma-Jordan} to the pair $(A_1,A_2)$ we can assume that we have one of the following two cases: 
\begin{enumerate}
\item[(a)] $a_6=0$;

\item[(b)] $a_6\neq 0$, $a_7=0$, $a_8=a_5$.
\end{enumerate}
Equalities ($T_1$)--($T_4$) imply that in  both case  
$$ b_4 = 2 a_1 - b_1,\quad  b_8 = a_5 + a_8 - b_5, $$
$$ b_{12} = a_9+ a_{12} - b_{9},\quad b_{16} = a_{13} + a_{16} - b_{13},$$
respectively. Therefore, the equality $b_1 = a_1$ follows from ($D_1$). Applying equalities ($T_{12}$), ($T_{13}$), ($T_{14}$) we obtain that $a_7=a_{11}=a_{15}=0$. In case (a) the matrix $A_2$ is diagonal and Lemma~\ref{lemma3-prop4} together with Remark~\ref{remark-generators2} concludes the proof. 

Assume that case (b) holds. Since $B_1$ is scalar, applying part~(1) of Remark~\ref{remark-Jordan} to $B_2$, we can assume that $b_7=0$. The same reasoning as above imply that the entries of ``new''{} matrices $B_2,B_3,B_4$ satisfy the same relations as the entries of ``old''{} matrices $B_2,B_3,B_4$. By equality ($D_2$) we have $b_5=a_5$. Hence 
$$Q= a_1 a_5 T_{34} + a_1 b_{13} T_{23} + a_1(a_9 + a_{12} - b_9)T_{24},$$
i.e., ($Q$) holds.
\end{proof}

%--------------------------------
\begin{lemma}\label{lemma5-prop4}
\mylabel{lemma5-prop4}
Assume that the condition of Proposition~\ref{prop4} holds for  $$A_1=\left(
\begin{array}{cc}
a_1 & 1\\
0 & a_1\\
\end{array}
\right) \quad\text{ and }\quad 
B_1=\left(
\begin{array}{cc}
b_1 & 1\\
0 & b_1\\
\end{array}
\right),$$
and $\Char{\FF}=2$. Then the equality ($Q$) holds.
\end{lemma}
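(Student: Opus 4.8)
The plan is to mirror the strategy of Lemma~\ref{lemma4-prop4}, but now both $A_1$ and $B_1$ are unipotent Jordan blocks and we work in characteristic $2$. First I would apply part~(b) of Lemma~\ref{lemma-Jordan} to the pair $(A_1,A_2)$ so as to reduce $A_2$ to one of two normal forms: either $a_6=0$, or $a_6\neq 0$ with $a_7=0$ and $a_8=a_5$. Since $\Char{\FF}=2$, the trace equalities ($T_1$)--($T_4$) become $b_4=-b_1=b_1$ (so $2a_1$ collapses appropriately) and the analogues $b_8=a_5+a_8-b_5$, etc.; combined with ($D_1$) these should force $b_1=a_1$, so that $B_1=A_1$ is exactly the same Jordan block. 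The key simplification in characteristic $2$ is that $\det(A_1)=a_1^2$ and the off-diagonal $1$ contributes nothing new to $\det$, so the constraint from ($D_1$) is clean.

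Next I would exploit the quadratic equalities. Having matched $A_1=B_1$, I would feed the pairs $(A_1,A_i)$ through ($T_{1i}$) for $i=2,3,4$; because $A_1$ has the specific unipotent form, $\tr(A_1 A_i)$ picks out a single lower-left entry plus the diagonal, and in characteristic $2$ the resulting linear equations should pin down the entries $a_7,a_{11},a_{15}$ (the $(2,1)$-entries of $A_2,A_3,A_4$) against their $B$-counterparts, or force them to vanish under the chosen normalization. In the benign case $a_6=0$ the matrix $A_2$ becomes diagonal (or lower-triangular), and then I expect to invoke an already-proven lemma — Lemma~\ref{lemma3-prop4} or Lemma~\ref{lemma4-prop4} — together with Remark~\ref{remark-generators2} to conclude, exactly as in the previous lemma. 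The substantive case is $a_6\neq 0$, where $A_2$ is upper-triangular scalar-on-the-diagonal.

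In that substantive case the goal is to produce an explicit polynomial identity expressing $Q$ as an $\FF$-linear combination (with coefficients built from the already-matched entries $a_1,a_5,\dots$) of the differences $T_{ij}$, $T_{ijk}$, just as the displayed identities $Q=a_1 a_5 T_{34}+\cdots$ do in Lemma~\ref{lemma4-prop4}. I would use ($D_2$), ($T_{23}$), ($T_{24}$) to solve for the dependent $B$-entries $b_6,b_{10},b_{14}$ in terms of $A$-entries and the free $b$-entries, then use ($T_{123}$), ($T_{124}$) to match the remaining entries of $A_3,A_4$; the characteristic-$2$ hypothesis is what makes these relations solvable without dividing by a factor like $a_1-a_4$ (which was available in Lemma~\ref{lemma3-prop4} but is unavailable here since $A_1$ is a single Jordan block with repeated eigenvalue). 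Here I also expect to normalize $B_2$ via part~(b) of Lemma~\ref{lemma-Jordan} or part~(1) of Remark~\ref{remark-Jordan}, checking that the normalization does not disturb the relations already imposed on $B_3,B_4$.

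The hard part will be the final identity in the $a_6\neq0$ branch: finding the right coefficients so that the $T$-combination equals $Q=\tr(A_1A_2A_3A_4)-\tr(B_1B_2B_3B_4)$ identically after all substitutions. In characteristic zero or for distinct eigenvalues one can divide by $a_1-a_4$, but the repeated-eigenvalue Jordan situation removes that tool, so the cancellations must come purely from the $\Char{\FF}=2$ relations — in particular the vanishing of the $2$-torsion terms and the coincidence of certain sums. I would verify the identity by expanding both sides symbolically and checking that every monomial in the $a_i$ matches, being careful that signs collapse correctly modulo $2$. Should a single linear combination fail to cover both normalization subcases of $A_2$, I would split into the subcases $a_7=0$ versus $a_6=0$ and present a separate explicit identity for each, exactly as the earlier lemmas do.
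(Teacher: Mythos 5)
Your overall strategy coincides with the paper's: normalize $A_2$ by part (b) of Lemma~\ref{lemma-Jordan}, use $\Char{\FF}=2$ together with ($D_1$) to force $b_1=a_1$, match the $(2,1)$-entries via ($T_{12}$)--($T_{14}$), and then solve for the remaining $b$'s from ($D_2$), ($T_{23}$), ($T_{24}$), ($T_{123}$), ($T_{124}$) so that ($Q$) becomes an explicit identity in the $T$'s. Two points, however, need attention. First, you have the difficulty inverted: in the paper the branch $a_6\neq0$ (so $a_7=0$, $a_8=a_5$) is the short one --- ($D_2$) gives $b_5=a_5$ and one then splits on $a_6\neq b_6$ versus $a_6=b_6$, each settled by a one- or two-term identity --- whereas $a_6=0$ is the computational heart of the proof, not the ``benign'' case.

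Second, and more substantively, your plan to dispatch the $a_6=0$ branch by invoking Lemma~\ref{lemma3-prop4} or Lemma~\ref{lemma4-prop4} via Remark~\ref{remark-generators2} is circular precisely when $A_2$ (and likewise $B_2$) is similar to a Jordan block rather than diagonalizable: after cyclically reordering to put $A_2$ in the first slot and conjugating to Jordan form, you would need Lemma~\ref{lemma5-prop4} itself. The paper sidesteps this by \emph{opening} with the reduction (Lemma~\ref{lemma4-prop4} plus Remark~\ref{remark-generators2}) to the situation where no $A_i$ and no $B_j$ is diagonal; in the $a_6=0$ branch this guarantees $a_7\neq0$, and $1/a_7$ is exactly the denominator that replaces the unavailable $1/(a_1-a_4)$ when one solves ($T_{23}$), ($T_{24}$) for $b_{10},b_{14}$, then ($T_{123}$), ($T_{124}$) for $b_9,b_{13}$, and finally ($D_2$) for $b_6$. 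Your stated fallback --- a separate explicit identity for each normalization subcase --- would recover the argument, but without first securing a nonzero pivot such as $a_7$ the linear systems you propose to solve need not be solvable, so this preliminary reduction is a necessary ingredient rather than an optional convenience.
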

\begin{proof}
By Lemma~\ref{lemma4-prop4} together with Remark~\ref{remark-generators2} we can assume that for any $i$ the matrix $A_i$ is not diagonal and for any $j$ the matrix $B_j$ is not diagonal.

Applying part (b) of Lemma~\ref{lemma-Jordan} to the pair $(A_1,A_2)$ we can assume that one of the following case holds:
\begin{enumerate}
\item[(1)] $a_6 = 0$,

\item[(2)] $a_6 \neq0$, $a_7 = 0$, $a_8 = a_5$.
\end{enumerate}
In both cases equalities ($T_2$)--($T_4$) imply that 
$$ b_8 = a_5 + a_8 - b_5, $$
$$ b_{12} = a_9 + a_{12} - b_{9},\quad b_{16} = a_{13} + a_{16} - b_{13},$$
respectively. Since $\Char{\FF} = 2$, then the equality $b_1 = a_1$ follows from ($D_1$). Applying equalities ($T_{12}$), ($T_{13}$) and ($T_{14}$), respectively, we obtain
$$ b_7 = a_7, \quad b_{11} = a_{11},\quad b_{15} = a_{15},$$
respectively.

Assume that we have case (1), i.e.,  $a_6 = 0$. 
Since the matrix $A_2$ is not diagonal, then $a_7\neq 0$. Then equalities  ($T_{23}$) and ($T_{24}$), respectively, imply that
$$\begin{array}{ccl}
b_{10} & = & \frac{1}{a_7}(a_5 b_9 + a_8 (b_9 - a_9) + a_9 b_5 - a_{11} b_6 + a_{12} (b_5 - a_5) - 2 b_5 b_9) + a_{10},\\
b_{14} &= &\frac{1}{a_7}(a_5 b_{13} + a_8 (b_{13} - a_{13}) + a_{13} b_5 - a_{15} b_6 + a_{16} (b_5 - a_5) - 2 b_{13} b_5) + a_{14},\\
\end{array}$$
respectively. Therefore, it follows from ($T_{123}$) and ($T_{124}$) that
$$\begin{array}{ccl}
b_9 & = & \frac{1}{a_7} a_{11} (b_5-a_5) + a_9,\\
b_{13} & = & \frac{1}{a_7} a_{15} (b_5-a_5)+a_{13},\\
\end{array}$$
respectively. If follows from equality ($D_2$) that
$b_6 = (a_5-b_5)(- a_8 + b_5))/a_7$. Thus equality ($Q$) is valid.

Assume that we have case (2). Equality ($D_2$) implies that $b_5=a_5$. 

Assume that $a_6\neq b_6$. Then equalities ($T_{23}$) and ($T_{24}$) imply that $a_{11}=0$ and $a_{15}=0$, respectively. The equality $Q=a_1 a_5 T_{34}$, i.e., ($Q$) is valid.

Finally, in case $a_6= b_6$ the equality 
$$Q = (a_5 + a_1 a_6)T_{134}  - a_1^2 a_6 T_{34}$$
concludes the proof.
\end{proof}

%---------------------------------
\begin{proof_of}{of Proposition~\ref{prop4}.}
If $\Char{\FF}$ is not two, then the fact that $S(4)$ is a generating set for $R^{GL(2)}$ concludes the proof. 

Assume that $\Char{\FF}=2$. By Lemma~\ref{lemma1-prop4}, we can assume that for every $i$ the matrix $A_i$ is non-zero as well as the matrix $B_i$. Lemmas~\ref{lemma3-prop4},~\ref{lemma4-prop4},~\ref{lemma5-prop4} together with part~(1) of Remark~\ref{remark-Jordan} conclude the proof.
\end{proof_of}

%-----------------------------------

\section{The proof of Theorem~\ref{theo-main}}

In this section we complete the proof of Theorem~\ref{theo-main}.
Assume $d\geq4$. Since $S(3)$ is a minimal separating set in case $d=3$ (see Lemma~\ref{lemma123}), we have that $S(d)$ does not contain a proper subset which is separating. On the other hand, in case $\Char{\FF}\neq 2$ the set $S(d)$ generates the algebra $R^{GL(2)}$, thus $S(d)$ is separating. 

Assume that $\Char{\FF}=2$ and $u=(A_1,\ldots,A_d)$, $v=(B_1,\ldots,B_d)$ are not separated by $S(d)$. Proposition~\ref{prop4} together with the description of the generating set for $R^{GL(2)}$ from Section~\ref{section-known} implies that $u,v$ are not separated by any invariant of degree four. This fact allows us to apply Proposition~\ref{prop4} to $(A_1,A_2,A_3,A_4A_5)$ and $(B_1,B_2,B_3,B_4B_5)$ and obtain that $u,v$ are not separated by $\tr(X_1\cdots X_5)$. The description of the generating set for $R^{GL(2)}$ implies that $u,v$ are not separated by any invariant of degree five. Repeating this reasoning we obtain that $u,v$ are not separated by any invariant of degree $d$. Thus $u,v$ are not separated. Hence the set $S(d)$ is separating and the theorem is proven.

\end{document}